\documentclass[a4paper,8pt]{article}

\usepackage{amsfonts}
\usepackage{amscd,color}
\usepackage{amsmath,amsfonts,amssymb,amscd}
\usepackage{indentfirst,graphicx,epsfig}
\usepackage{graphicx}
\usepackage{tikz}
\usetikzlibrary{calc, shapes.geometric, positioning}
\usetikzlibrary{calc, decorations.pathreplacing, calligraphy, shapes.geometric}
\input{epsf}
\usepackage{epstopdf}
\usepackage{caption}
\usepackage{mdframed}
\newtheorem{theorem}{Theorem}[section]
\newtheorem{definition}[theorem]{Definition}
\newtheorem{example}[theorem]{Example}

\newtheorem{claim}[theorem]{Claim}

\tikzstyle{snode}=[circle,draw=black,fill=white,thick, inner sep=0pt ,minimum size=1.2mm]
\tikzstyle{bnode}=[circle ,draw=black,fill=black,thick, inner sep=0pt ,minimum size=1.2mm]

\newenvironment {proof} {\noindent{\em Proof.}}{\hspace*{\fill}$\Box$\par\vspace{4mm}}
\newcommand{\ml}{l\kern-0.55mm\char39\kern-0.3mm}

\newenvironment{theorem-non}[1]{\trivlist \item [\hskip \labelsep {\bf #1}]\ignorespaces\it}{\endtrivlist}

\title{\textbf{The number of cut-edges  and conflict-free connection number in planar graphs}}

\author{Pham Hoang Ha\footnote{E-mail address: ha.ph@hnue.edu.vn .}\\
	Department of Mathematics\\
	School of Mathematics and Computer Science\\
	Hanoi National University of Education\\
	136 XuanThuy Str., Hanoi, Vietnam\\
	\medskip\\    
	Dang Dinh Hanh \footnote{E-mail address: hanhdd@hau.edu.vn (Corresponding author)}\\
	Department of Mathematics\\
	Hanoi Architectural University\\
	129 TranPhu Str., Hanoi, Vietnam\\
	\medskip\\
	Vu Quang Minh\footnote{E-mail address: quangminhvu.nckh@gmail.com}\\
	Department of Mathematics\\
	School of Mathematics and Computer Science\\
	Hanoi National University of Education\\
	136 XuanThuy Str., Hanoi, Vietnam}%
\date{\today}

\begin{document}
\maketitle

\begin{abstract}
A \textit{cut-edge} of a connected graph is an edge whose deletion increases the number of components. In this paper, we first state some conditions for a planar graph to have a few cut-edges. After that, we use the main results to study the conditions for a colored planar graph to have a bounded conflict-free connection number.   

\textbf{Keywords:} conflict-free connection number, cut-edge, degree sum.

\textbf{AMS subject classification 2010:} 05C15, 05C40, 05C07.
\end{abstract}

\section{Introduction}
In this paper, we only consider finite simple graphs. Let $G$ be a connected graph. We denote by $V(G)$, $E(G)$, $\deg_G(v)$ the vertex set, the edge set, the degree of vertex $v$ in $G$, respectively. For each subset $X\subset V(G),$ we
use $G-X$ to denote the graph obtained from $G$ by deleting the
vertices in $X$ together with their incident edges. We define $G-uv$ to be the
graph obtained from $G$ by deleting the edge $uv\in E(G)$, and
$G+uv$ to be the graph obtained from $G$ by adding an edge $uv$
between two non-adjacent vertices $u$ and $v$ of $G$. We abbreviate the set $\lbrace 1,2,\ldots ,k\rbrace$ by $[k]$. We use \cite{West2001} for terminology and notation not defined here.

A \textit{cut-edge} of a connected graph is an edge whose deletion increases the number of components. If $G$ is a connected graph of order $n$, then it is well known that $G$ contains at most $n-1$ cut-edges. The connected graphs of order $n$ with $n-1$ cut-edges must be trees. However, if the additional constraints of a connected graph are given, then the problem of determining the maximum number of cut-edges is nontrivial. Many interesting results to determine the maximum number of cut-edges in connected graphs were obtained. The range of the number of cut-edges in a connected graph of order $n$ and size $m$ was considered by Rao \cite{Rao1968}. Furthermore, problems with additional conditions on the maximum degree and minimum degree were also considered in \cite{Rao1969}. In \cite{Achuthan2003, Suil2010}, the authors determined the maximum number of cut-edges in a connected $d$-regular graph of order $n$.

Recently, the connection concepts of connected graphs that are rainbow connection, proper connection, conflict-free connection are introduced and considered. These topics having many applications in communication networks are very interesting. There are lots of papers which have been published about them. Moreover, the problems of determining the number of cut-edges of connected graphs under various additional conditions play a vital role in computing the proper connection number in \cite{Aardt2017} and the conflict-free connection number in \cite{Chang2018, Doan2024}.

In this paper, we consider some conditions for a connected planar graph to have a bounded number of cut-edges. 

The first main purpose of this paper is to study the maximum of cut-edges with the condition on the size of a connected planar graph. In particular, we prove Theorem \ref{thm1} in Section 2.

Moreover, Chang et al. \cite{Chang2018} give some degree sum conditions for a connected graph to have a few cut-edges.
\begin{theorem}(\cite{Chang2018})
	\label{thm_Chang_1}
	Let $G$ be a connected graph of order $n\geq k^2 + 4k +4, k\geq1$. If $\delta(G)\geq\frac{n-k-1}{k+2}$, then the number of cut-edges of $G$ is no more than $k$.
\end{theorem}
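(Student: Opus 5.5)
Suppose, for a contradiction, that $G$ has at least $k+1$ cut-edges. Delete all cut-edges of $G$. The remaining components are the $2$-edge-connected blocks (bridge components) of $G$. Contracting each of them to a single vertex turns the cut-edges into the edges of a tree $T$, the bridge tree. Since $G$ has at least $k+1$ cut-edges, $T$ has at least $k+2$ vertices. The argument is to lower-bound the number of vertices in the components that correspond to the leaves of $T$, and, when needed, in some internal components as well, using $\delta=\delta(G)\ge \frac{n-k-1}{k+2}$. Adding these bounds should give more than $n$ vertices.

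\textbf{Size of a leaf component.} Let $C$ be the component of $G$ minus its cut-edges corresponding to a leaf of $T$. Then exactly one cut-edge $uv$ leaves $C$, with $u\in C$.

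If $|C|=1$, then $u$ has degree $1$, so $\delta=1$. Since $n\ge (k+2)^2$, we have $\frac{n-k-1}{k+2}>1$, which contradicts the hypothesis.

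So $|C|\ge 2$. Take a vertex $w\in C$ with $w\neq u$. All of its neighbours lie in $C$, so $|C|\ge \delta+1$.

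\textbf{Counting.} Write $\ell$ for the number of leaves of $T$.

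If $\ell\ge k+2$, the leaf components alone give
\[
n\ \ge\ (k+2)(\delta+1)\ \ge\ n-k-1+k+2\ =\ n+1,
\]
a contradiction.

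Otherwise $\ell\le k+1$. Then $T$ has at least $k+2-\ell$ internal vertices. I would use the degree-$2$ vertices of $T$, meaning components incident to exactly two cut-edges. Such a component also has at least $\delta+1$ vertices when it contains a vertex outside the two endpoints of its cut-edges, or when an endpoint has degree at least $2$ inside the component. A single-vertex component is allowed: a vertex incident to two cut-edges and nothing else. However, that vertex has degree $2$, so $\delta\le 2$ and hence $n\le 3k+7$. This is below $(k+2)^2$ once $k\ge1$, a contradiction.

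More generally, a component with $j$ incident cut-edges has at least $\delta+1-j$ vertices. The fallback is the standard estimate: the sum over all vertices of $T$ of (component size) must be at least about $(k+2)(\delta+1)$ minus a correction. This correction is controlled by the degree sum of $T$, which is $2|E(T)|$. The hypothesis $n\ge k^2+4k+4=(k+2)^2$ is exactly what absorbs that correction.

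\textbf{Main obstacle.} The delicate step is the case of few leaves and many internal components, since internal components can be small. The most promising route is to choose $k+1$ cut-edges and consider the forest they span in $T$. Each of the $k+2$ "sides" obtained by cutting these edges is a union of components. Each side contains a vertex all of whose neighbours lie in that side: a vertex not incident to any of the $k+1$ chosen cut-edges. Such a vertex exists unless the side is tiny, and tiny sides force small degree, which the bound $n\ge (k+2)^2$ forbids. This gives $k+2$ disjoint vertex sets of size at least $\delta+1$ and the same contradiction $n\ge n+1$. Ruling out tiny sides is where the lower bound on $n$ is used.
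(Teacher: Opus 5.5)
\textbf{Verdict: genuine gap.} The paper does not prove this statement; it quotes it from Chang et al. So I judge your proposal on its own.

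\textbf{The gap.} Your final route is the right one: fix $k+1$ cut-edges and show that each of the $k+2$ sides contains a vertex not incident to any of them. But the step that carries the entire argument is only asserted (``tiny sides force small degree''), and the material before it does not fill the hole.

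The leaf count settles only the case $\ell\ge k+2$. The ``fallback'' is false as stated. Summing $|C|\ge\delta+1-j$ over the $t\ge k+2$ bridge components gives only
\[
n\ \ge\ t(\delta-1)+2\ \ge\ (k+2)\delta-k\ \ge\ n-2k-1,
\]
so $n\ge (k+2)^2$ absorbs nothing there. There is also a small arithmetic slip: $\delta\le 2$ gives $n\le 3k+5$, not $3k+7$, and with $3k+7$ your claimed inequality fails at $k=1$.

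A loss of about $2k$ is exactly what you incur by taking an arbitrary vertex in non-leaf components. The paper's proof of Theorem~\ref{thm2} does this: it secures such free vertices only in two leaf components, and that is why it reaches only the weaker threshold $\frac{n+k-1}{k+2}$. To reach $\frac{n-k-1}{k+2}$ you must produce a free vertex in every side.

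\textbf{How to fill it.} The step is short once you extract the right consequence of the order bound. We have $\delta\ge\frac{n-k-1}{k+2}\ge k+2-\frac{k+1}{k+2}>k+1$, so $\delta\ge k+2$. This is the only use of $n\ge k^2+4k+4$, which is precisely the condition for $\frac{n-k-1}{k+2}>k+1$.

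Let $D$ be a side and set $s=|D|$. Let $d\le k+1$ be the number of chosen cut-edges meeting $D$, and $c(x)$ the number of them at $x$. Each chosen cut-edge is a bridge of $G$, so it has at most one end in $D$, and $\sum_{x\in D}c(x)=d$.

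Suppose $c(x)\ge 1$ for all $x\in D$. Then $s\le d\le k+1$. For each $x\in D$, $k+2\le\deg(x)\le s-1+c(x)$, so $c(x)\ge k+3-s$. Summing gives
\[
k+1\ \ge\ d\ =\ \sum_{x\in D}c(x)\ \ge\ s(k+3-s)\ =\ k+2+(s-1)(k+2-s)\ \ge\ k+2,
\]
a contradiction.

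Hence every side contains a vertex all of whose neighbours lie in that side. Every side therefore has at least $\delta+1$ vertices, and $n\ge (k+2)(\delta+1)\ge (n-k-1)+(k+2)=n+1$. With this lemma written out, the whole bridge-tree case analysis (leaves versus internal components) becomes unnecessary.
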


\begin{theorem}(\cite{Chang2018})
	\label{thm_Chang}
	Let $G$ be a connected graph of order
	\[n\geq \max\left\lbrace k^2+5k+6,\frac{\lfloor\frac{k+2}{2}\rfloor\cdot (k+2)k+k^2-k-3}{k - 2}\right\rbrace, k\geq3.\] If
	$\sigma_2(G)\geq\frac{2n-2k-3}{k+2}$, then $G$ has at most $k$ cut-edges.
\end{theorem}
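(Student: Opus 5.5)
We argue by contradiction: assume $G$ has at least $k+1$ cut-edges and derive a vertex pair violating the $\sigma_2$ bound. Recall $\sigma_2(G)=\min\{\deg(u)+\deg(v): uv\notin E(G)\}$.

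\emph{Step 1 (bridge tree).} Delete all cut-edges of $G$. The resulting components $C_1,\dots,C_t$ are single vertices or $2$-edge-connected subgraphs. Contracting each $C_i$ to a point gives a tree $T$ with $t\ge k+2$ nodes, whose edges are exactly the cut-edges of $G$. A leaf component $C$ of $T$ meets exactly one cut-edge $e=xy$ with $x\in C$. Hence every $v\in C$ satisfies $\deg_G(v)\le |C|-1$ if $v\ne x$, and $\deg_G(x)\le |C|$. If $C=\{x\}$, then $\deg(x)=1$. Moreover $T$ has at least two leaves, and at least $\ell$ leaves when it has a node of degree $\ell$.

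\emph{Step 2 (small leaf components).} Take two leaf components $C,C'$. Every vertex of $C$ is nonadjacent to every vertex of $C'$ (when $|T|\ge 3$), so choosing $u\in C$ and $v\in C'$ of minimum degree gives
\[
\frac{2n-2k-3}{k+2}\le \sigma_2(G)\le \deg(u)+\deg(v)\le |C|+|C'|-2+(\text{correction for } x\text{'s}).
\]
It is cleaner to choose $u,v$ different from the attachment vertices whenever $|C|\ge 2$. This forces every pair of leaf components to have total size roughly $\ge \frac{2n-2k-3}{k+2}+2$. In particular, at most one leaf component is small, and if a leaf is a single pendant vertex then every other leaf component has size about $\frac{2n}{k+2}$.

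\emph{Step 3 (counting).} Let $L$ be the number of leaves of $T$, let $s$ be the number of internal nodes, and note the number of cut-edges is $t-1\ge k+1$. Internal singleton components of degree $2$ in $T$ (path-like vertices) have degree $2$ in $G$. Pairing such a vertex with a nonadjacent vertex, for instance a minimum-degree vertex of a leaf component, gives $\sigma_2\le 2+(\text{leaf degree})$, which again forces leaves to be large. The plan is a case split on $L$.
\begin{itemize}
\item If $L\ge \lfloor\frac{k+2}{2}\rfloor+1$ or so, the leaf components are pairwise forced to be large, and summing their sizes exceeds $n$ once $n$ exceeds the stated threshold.
\item If $L$ is small, $T$ has many internal nodes along long paths. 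Internal components $C$ with $\deg_T(C)=2$ contain a vertex of degree at most $|C|$. Pairing such vertices with each other, when nonadjacent, forces them to be large too. With $k+1$ edges, the total size of the forced-large components is about $\frac{(k+2)}{2}\cdot\frac{2n-2k-3}{k+2}$ plus lower-order terms, which exceeds $n$ for $n\ge k^2+5k+6$.
\end{itemize}
The second lower bound on $n$, $\frac{\lfloor (k+2)/2\rfloor (k+2)k+k^2-k-3}{k-2}$, is presumably exactly the inequality obtained in the case where about $\lfloor\frac{k+2}{2}\rfloor$ pairs are each forced to have sum $\ge\frac{2n-2k-3}{k+2}+\text{const}$. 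I would set up the sum $\sum_i |C_i|\le n$ and solve for $n$ to match it.

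\emph{Main obstacle.} The delicate part is the pairing in Step 3. A single small component (even a pendant vertex) is allowed, since $\sigma_2$ only controls pairs, so components must be matched into disjoint nonadjacent pairs. I would pair the $k+2$ components greedily along $T$ as $\lfloor\frac{k+2}{2}\rfloor$ disjoint pairs of nonadjacent representatives: leaves with leaves, and internal nodes with nodes at $T$-distance $\ge2$. In each pair I would choose the representatives to minimize degree, bounding $\deg(v)\le |C|-1+\deg_T(C)$. Summing gives $\lfloor\frac{k+2}{2}\rfloor\cdot\frac{2n-2k-3}{k+2}\le n+O(k^2)$. The case $k+2$ odd, together with the loss from the $\deg_T$ terms, is where the constants must be tracked carefully against the hypothesis on $n$.
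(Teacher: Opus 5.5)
The paper gives no proof of this statement; it is quoted from Chang et al. So I judge your plan on its own merits. Steps 1 and 2 are fine, but Step 3, which carries the whole argument, never produces a contradiction.

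The hypothesis has essentially no slack. With $s_0=\frac{2n-2k-3}{k+2}$ one has $(k+2)\bigl(\frac{s_0}{2}+1\bigr)=n+\frac12$. So each of the $k+2$ components must be shown to contribute about $s_0/2$ plus an extra $+1$.

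Your final inequality $\lfloor\frac{k+2}{2}\rfloor\cdot\frac{2n-2k-3}{k+2}\le n+O(k^2)$ holds for every $n$. Its left side equals $n-k-\frac32$ for even $k$ and $\frac{k+1}{k+2}\bigl(n-k-\frac32\bigr)$ for odd $k$. No tracking of constants against the thresholds on $n$ can turn it into a contradiction.

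The same arithmetic refutes both of your bullets:
\begin{itemize}
\item $\frac{k+2}{2}\cdot\frac{2n-2k-3}{k+2}=n-k-\frac32$ does not exceed $n$.
\item $L$ pairwise-constrained leaf components, each of size about $\frac{s_0}{2}+1=\frac{n+1/2}{k+2}$, total about $L\cdot\frac{n+1/2}{k+2}$. This exceeds $n$ only when $L\ge k+2$, not when $L\approx\frac{k+2}{2}$.
\end{itemize}
The losses come from two choices. The bound $\deg(v)\le|C|-1+\deg_T(C)$ throws away $\sum_C\deg_T(C)=2(k+1)$, which is far more than the available slack. A disjoint matching of components also leaves one component with no degree information when $k$ is odd.

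Here is what is missing.
\begin{itemize}
\item For every non-singleton component, use a vertex $v_C$ lying on no cut-edge, as you already did for leaves in Step 2. Then $|C|\ge\deg(v_C)+1$ exactly, and $v_C$ has no neighbour outside $C$.
\item Compare all representatives with the single one of least degree $d$, not in disjoint pairs. Vertices of degree below $\sigma_2/2$ are pairwise adjacent. The only adjacencies among representatives are between singleton components adjacent in $T$. Hence at most two representatives lie below $\sigma_2/2$, and if there are two they are adjacent singletons. Every other free representative satisfies $\deg(v_C)\ge\max\{d,\sigma_2-d\}\ge\sigma_2/2$.
\item The real work is bounding the degrees of singletons, since a singleton $w$ has $|\{w\}|=1$ but degree $\deg_T(w)$. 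This is where the thresholds on $n$ enter. For example, with $k+1$ chosen cut-edges and two adjacent singletons $w_1,w_2$, one gets $\deg(w_1)+\deg(w_2)\le k+2$, so the smaller degree is at most $\lfloor\frac{k+2}{2}\rfloor$.
\item The second threshold is exactly equivalent to $2+k\bigl(s_0-\lfloor\frac{k+2}{2}\rfloor+1\bigr)\ge n+\frac1{k+2}$. It measures one low-degree vertex against $k$ components, not $\lfloor\frac{k+2}{2}\rfloor$ pairs as you guessed.
\item You must also handle components in which every vertex lies on a chosen cut-edge, since they have no vertex of degree at most $|C|-1$.
\end{itemize}
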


We remark that when $n\ge 5$, the conditions $n \ge k^2+4k+4$ and $\delta(G)\geq\frac{n-k-1}{k+2}$ in Theorem \ref{thm_Chang_1}
imply that
\[
\delta(G) \ge \frac{(k+2)^2-k-1}{k+2} > k+2-1 = k+1 \ge 6.
\]
This does not hold if $G$ is planar. Thus, a natural question is whether the condition $\delta(G)\geq\frac{n-k-1}{k+2}$ remains valid in a planar graph without the condition $n \ge k^2+4k+4.$ The second main purpose of this paper, we give an affirmation for this question.

\begin{theorem} \label{thm2}
	Let $n,k \in \mathbb{N}^*$ with $ 3< n\leq 4k+11$. Let $G$ be a connected planar graph of order $n$ satisfying
	\[
	\delta(G) \ge \frac{n+k-1}{k+2}.
	\]
	Then $G$ has at most $k$ cut-edges.
\end{theorem}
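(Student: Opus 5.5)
We argue by contradiction. Suppose $G$ has at least $k+1$ cut-edges, and delete all cut-edges of $G$. The resulting components $C_1,\dots,C_c$ are $2$-edge-connected pieces (possibly single vertices), and contracting each of them gives the bridge tree $T$ of $G$, which has $c\ge k+2$ nodes. Write $d_i$ for the degree of $C_i$ in $T$, so that $\sum_i (d_i-2)=-2$.

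First we pin down $\delta$. Since $n>3$, we have $n+k-1>k+2$, hence $\delta\ge 2$. Planarity gives $\delta\le 5$, and this is consistent with $n\le 4k+11$, because then $\frac{n+k-1}{k+2}\le 5$. The hypothesis rewrites as
\[
n\le \delta(k+2)-k+1=(\delta-1)(k+2)+3 .
\]
So it suffices to show
\[
n\ge (\delta-1)c+4 ,
\]
since $c\ge k+2$ then gives the contradiction. The extremal picture supports this target. For $\delta=2$, a path of $k+2$ blocks consisting of two end triangles and $k$ middle single vertices has exactly $n=k+6=(\delta-1)(k+2)+4$, which shows the target inequality cannot be improved.

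The plan is a local weight inequality, proved for each component:
\[
|C_i|\ \ge\ (\delta-1)-2(d_i-2). \qquad (\ast)
\]
Summing $(\ast)$ over all $i$ and using $\sum_i(d_i-2)=-2$ gives
\[
n=\sum_i |C_i|\ \ge\ (\delta-1)c+4 ,
\]
as required. To prove $(\ast)$ we split according to $d_i$.

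\emph{Leaves ($d_i=1$).} Here $(\ast)$ asks for $|C_i|\ge \delta+1$. Every vertex of $C_i$ other than the endpoint of its unique bridge has all its neighbours inside $C_i$, so $|C_i|\ge\delta+1$. In fact planarity gives more: $2|E(C_i)|\ge \delta|C_i|-1$ together with $|E(C_i)|\le 3|C_i|-6$ yields $|C_i|\ge 11$ when $\delta=5$ and $|C_i|\ge 6$ when $\delta=4$. This slack is available if some other case turns out tight.

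\emph{Non-leaves ($d_i\ge 2$).} Set $s=|C_i|$. If $s\ge \delta-1$, then $(\ast)$ is immediate, since its right-hand side is at most $\delta-1$. Otherwise $s\le\delta-2$, and each vertex of $C_i$ has at most $s-1$ neighbours inside $C_i$, hence at least $\delta-s+1$ incident bridges. Therefore
\[
d_i\ge s(\delta-s+1).
\]
It then remains to verify the inequality $s\ge \delta-1-2(s(\delta-s+1)-2)$ for $1\le s\le \delta-2$ and $2\le\delta\le 5$. This is a finite check: the quantity $s(\delta-s+1)$ is concave in $s$, so its minimum is attained at an endpoint $s=1$ or $s=\delta-2$, and it is at least $\delta$ there.

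I expect the main obstacle to be making $(\ast)$ hold uniformly, together with correctly handling a bridge endpoint shared between a small component and its neighbours. If the coefficient $2$ in $(\ast)$ fails in some boundary case, I would first try to reallocate weight from leaves to their neighbours. The planar leaf bounds above (for example $11$ versus $\delta+1=6$ when $\delta=5$) leave room for this, and this is where planarity and the restriction $n\le 4k+11$ (equivalently $\delta\le5$) would genuinely be used.
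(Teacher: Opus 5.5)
Your argument is correct, and it reaches the paper's key inequality through different bookkeeping. Every case of $(\ast)$ checks out. For a non-leaf with $s\le\delta-2$ you get $d_i\ge s(\delta-s+1)\ge\delta$, so the right-hand side of $(\ast)$ is at most $3-\delta\le 0<s$. In the leaf case you should add one line: $|C_i|\ge 2$, because a one-vertex leaf would have degree $1<\delta$; this guarantees a vertex off the bridge exists. Summing $(\ast)$ with $\sum_i(d_i-2)=-2$ then gives $n\ge(\delta-1)c+4$, which contradicts $n\le(\delta-1)(k+2)+3$.

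The paper proceeds differently:
\begin{itemize}
\item It deletes only $k+1$ cut-edges and picks one vertex $x_i$ in each of the $k+2$ components; in the two leaf components it picks a vertex not on the bridge.
\item It uses $\deg_G(x_i)\le (n_i-1)+b_i$, where $b_i$ counts the deleted cut-edges at $x_i$.
\item These $k+1$ edges have $2k+2$ endpoints, two of which lie in the leaf components away from $x_1,x_{k+2}$. Hence $\sum_i b_i\le 2k$, and so $(k+2)\delta\le n+k-2$. This is exactly your inequality with $c=k+2$.
\end{itemize}
Charging bridges to the endpoints of one representative vertex per component avoids any case split. Your per-component charging by tree degree needs the extra small-component case, but in exchange it gives a local, discharging-type inequality that is tight on the triangle--path--triangle example.

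Your closing worry is unfounded: $(\ast)$ holds uniformly with coefficient $2$, and no reallocation from leaves is needed. Moreover, neither planarity nor $n\le 4k+11$ is actually used. The same is true of the paper, whose $\min\{\cdot,5\}$ terms are discarded at the end. The argument works for every connected graph with $n>3$.
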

By repeating the similar arguments as above, we also show that when $k \ge 4$, the conditions $n \ge k^2+5k+6$ and $\sigma_2(G)\geq\frac{2n-2k-3}{k+2}$ in Theorem \ref{thm_Chang} do not happen for a planar graph. Therefore,  we state a similar condition on the degree sum of two vertices for a connected planar graph to have a bounded number of cut-edges for the next purpose of this paper.

\begin{theorem}\label{thm3}
	Let $G$ be a connected planar graph. If $\sigma_2(G)\geq 11,$ then $G$ has at most one cut-edge.
\end{theorem}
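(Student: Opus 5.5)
The idea is to show that if $G$ had at least two cut-edges, then some block of the bridge tree would be a small planar piece that must contain a vertex of low degree. That would force two nonadjacent vertices with degree sum at most $10$, contradicting $\sigma_2(G)\ge 11$.

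Recall $\sigma_2(G)$ is the minimum of $\deg u+\deg v$ over nonadjacent pairs $u,v$. Suppose $G$ has at least two cut-edges. Contract each $2$-edge-connected component of $G$ to a single vertex; this gives a tree $T$ with at least three nodes. $T$ has at least two leaves, and each leaf corresponds to a component $H$ of $G-e$ for some cut-edge $e=xy$ with $x\in V(H)$. Let $H_1,H_2$ be the vertex-disjoint components corresponding to two distinct leaves, with attaching cut-edges $e_1,e_2$ and attachment vertices $x_1\in V(H_1)$, $x_2\in V(H_2)$. Vertices of $H_1$ and $H_2$ are never adjacent, since $H_1$ and $H_2$ are distinct leaf components.

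The main step is to show that each leaf component $H$ contains a vertex $w$ with $\deg_G(w)\le 5$. Every vertex of $H$ other than $x$ has $\deg_G=\deg_H$, and $\deg_G(x)=\deg_H(x)+1$. We split into cases.

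\emph{Case $|V(H)|=1$.} Then $H=\{x\}$ and $\deg_G(x)=1$.

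\emph{Case $|V(H)|=2$.} This is impossible: the single edge of $H$ would be a bridge, but $H$ is a $2$-edge-connected leaf component (it contains no cut-edge of $G$).

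\emph{Case $|V(H)|\ge 3$.} $H$ is a simple planar graph, so $\sum_{v\in V(H)}\deg_H(v)\le 6|V(H)|-12$. If every vertex of $H-x$ had $\deg_H\ge 6$, then
\[
6(|V(H)|-1)+\deg_H(x)\le 6|V(H)|-12,
\]
which is impossible since $\deg_H(x)\ge 0$. Hence some $w\ne x$ in $H$ has $\deg_G(w)=\deg_H(w)\le 5$.

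This sum bound is the key step. We deliberately choose $w\ne x$ to avoid the $+1$ coming from the cut-edge.

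Applying this to $H_1$ and $H_2$ gives nonadjacent vertices $w_1,w_2$ with $\deg w_1+\deg w_2\le 10<11$, a contradiction.

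The main obstacle I expect is making sure $w_1$ and $w_2$ really are nonadjacent in every case. When a leaf component is a single vertex, $w$ is the attachment vertex $x$ itself. Two leaf vertices of $T$ could in principle be joined by the edge $e$ only if $T$ has exactly two nodes. That case is excluded because there are at least two cut-edges, so $T$ has at least three nodes and two leaves are never adjacent in $T$. Therefore $w_1$ and $w_2$ lie in distinct non-adjacent components and are not adjacent in $G$.
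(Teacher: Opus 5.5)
Your proof is correct and uses the same strategy as the paper: take two non-adjacent ``end'' pieces of the cut-edge structure and find in each a vertex of $G$-degree at most $5$. The paper uses the two outer components of $G\setminus\{e_1,e_2\}$, while you use two leaves of the bridge tree. Your count $6(|V(H)|-1)+\deg_H(x)\le 6|V(H)|-12$ yields such a vertex different from the attachment vertex. This is cleaner than the paper's bare appeal to ``every planar graph has a vertex of degree at most 5'' applied to $C_1+\{e_1\}$, since that fact alone could return the pendant endpoint of $e_1$.
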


Motivated by the above results, we also introduce several sharp conditions for a connected planar graph $G$ to have a bounded conflict-free connection number in Section 4 for the final purpose of this paper.

\section{The number of cut-edges and the size of planar graphs}

Before stating the main theorem, we first introduce a class of planar graphs.
\begin{definition}
	\upshape
	Let $H$ be a connected graph with exactly $k+1$ cut-edges $\{e_i\}, i \in [k+1]$ and  $H- \{e_i| i\in [k+1]\}$ has $k+2$ components $C_j, j \in[k+2]$ such that $|V(C_i)|= 1$ for all $i\in [k+1]$ and $C_{k+2}$ is a connected planar graph with the maximum number of edges. Set $\mathcal H$ to be the set of such graphs $H.$
\end{definition}
\begin{center}
	\begin{tikzpicture}[
		vertex/.style={circle, draw, fill=black, inner sep=0pt, minimum size=4pt},
		cutedge/.style={black, thick},
		component_ellipse/.style={ellipse, draw=black, thick, minimum width=2.5cm, minimum height=1.5cm, align=center},
		labelstyle/.style={font=\footnotesize, color=black}
		]
		\def\edgelen{2cm} 
		\node[vertex, label=above:$C_{k+1}$] (bk1) at (0,0) {};
		\node[vertex, label=above:$C_k$] (bk) at (-\edgelen, 0) {};
		\node[font=\large] (dots) at (-2*\edgelen, 0) {$\dots$};
		\node[vertex, label=above:$C_2$] (b2) at (-3*\edgelen, 0) {};
		\node[vertex, label=above:$C_1$] (b1) at (-4*\edgelen, 0) {};
		\node[component_ellipse, right=\edgelen of bk1, anchor=west] (Bk2) {\large $C_{k+2}$};
		\draw[cutedge] (bk1) -- node[above, black] {$e_{k+1}$} (Bk2.west);
		\draw[cutedge] (bk) -- node[above, black] {$e_k$} (bk1);
		\draw[cutedge] (dots) -- (bk);
		\draw[cutedge] (b2) -- (dots);
		\draw[cutedge] (b1) -- node[above, black] {$e_1$} (b2);
	\end{tikzpicture}
\begin{tikzpicture}[
	thick,
	main_node/.style={
		circle, 
		draw=black, 
		minimum size=1.8cm, 
		inner sep=2pt
	},
	dot/.style={
		circle, 
		fill=black, 
		inner sep=1.5pt, 
		outer sep=0pt
	},
	line_style/.style={draw=black} 
	]
	\node[main_node] (Center) at (0,0) {$C_{k+2}$};
	\coordinate (B1_pos) at (-1.8, 3.5);
	\draw[line_style] (Center) -- node[left] {$e_1$} (B1_pos) node[dot, label=above:$C_1$] {};
	\coordinate (B2_pos) at (-2.5, 0.5);  
	\coordinate (tail_end) at (-4.2, 1.8); 
	\draw[line_style] (Center) -- node[below] {$e_2$} (B2_pos) node[dot, label=below left:$C_2$] {};
	\draw[line_style] (B2_pos) -- (tail_end) node[dot] {};
	\coordinate (split_left) at (-2.0, -2.0); 
	\coordinate (sub_branch_end1) at (-3.5, -1.8);
	\coordinate (sub_elbow) at (-2.2, -3.2);  
	\coordinate (sub_branch_end2) at (-1.0, -3.8);
	\draw[line_style] (Center) -- (split_left) node[dot] {};
	\draw[line_style] (split_left) -- (sub_branch_end1) node[dot] {};
	\draw[line_style] (split_left) -- (sub_elbow) node[dot] {} -- (sub_branch_end2) node[dot] {};
	\coordinate (split_right) at (1.6, -1.8);
	\coordinate (Bk1_pos) at (3.8, -3.2);
	\coordinate (Dk1_pos) at (0.8, -3.5);
	\draw[line_style] (Center) -- (split_right) node[dot] {};
	\draw[line_style] (split_right) -- node[above right] {$e_{k+1}$} (Bk1_pos) node[dot, label=right:$C_{k+1}$] {};
	\draw[line_style] (split_right) -- (Dk1_pos) node[dot, label=below:$C_{k-1}$] {};
\end{tikzpicture}
\centerline{Figure 1: Two examples of $H$ in $\mathcal{H}$}
\label{H}
\end{center}
We now have a following theorem.
\begin{theorem}\label{thm1}
	Consider two positive integers $n, k$. Let $G$ be a connected planar graph of order $n$ and $m=|E(G)|$. If $m > 3n - 2k - 9$, then $G$ has at most $k$ cut-edges unless one of the following cases holds:
	\begin{itemize}
		\item [{a)}] $G$ is isomorphic to a graph $H$ in $\mathcal H.$
		\item [{b)}] $G$ is isomorphic to a tree of order $k+2$.
		\item [{c)}] $G$ is isomorphic to a tree of order $k+3$.
	\end{itemize}
\end{theorem}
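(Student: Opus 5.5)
We argue by contradiction: suppose $G$ has $t\ge k+1$ cut-edges $e_1,\dots,e_t$ and $m>3n-2k-9$. Deleting all cut-edges leaves $t+1$ components $C_1,\dots,C_{t+1}$, each a connected planar graph (possibly a single vertex) and each $2$-edge-connected. Write $n_j=|V(C_j)|$, so $\sum_j n_j=n$ and
\[
m=t+\sum_{j=1}^{t+1}|E(C_j)| .
\]
Each summand is bounded using planarity: $|E(C_j)|=0$ if $n_j=1$; $n_j=2$ is impossible, since a single edge would itself be a cut-edge of $G$; $|E(C_j)|\le 3$ if $n_j=3$; and $|E(C_j)|\le 3n_j-6$ whenever $n_j\ge 3$. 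So the bound $f(n_j)=3n_j-6$ holds for $n_j\ge3$, while for a trivial component the loss relative to $3n_j$ is exactly $3$ instead of $6$.

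Let $s$ be the number of trivial components and $r=t+1-s$ the number of nontrivial ones, each having at least $3$ vertices. Then
\[
m\le t+3(n-s)-6r = t+3n-3s-6(t+1-s)=3n-5t-6+3s .
\]
If $r\ge 1$ we also have $s\le t$, and if $r=0$ then $G$ is a tree. The tree case is handled first. Here $m=n-1$ and $t=n-1\ge k+1$, so the hypothesis becomes $n-1>3n-2k-9$, i.e.\ $n<k+4$. Together with $n\ge k+2$ this gives $n\in\{k+2,k+3\}$, which are exactly cases b) and c).

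Now assume $r\ge1$. The real leverage comes from the structure of the cut-edges: they form the edges of a tree $T$ whose nodes are the components. I would therefore sharpen the count with the identity
\[
m=(n-1)+\sum_j\bigl(|E(C_j)|-n_j+1\bigr),
\]
where each term is at most $2n_j-5$ for $n_j\ge3$ and equals $0$ for $n_j=1$. Writing $N=\sum_{n_j\ge3} n_j = n-s$, this gives
\[
m\le n-1+2(n-s)-5r = 3n-2s-5r-1 .
\]
Since $s=t+1-r\ge k+2-r$, it follows that $m\le 3n-2k-3r-5$. For $r\ge 2$ this yields $m\le 3n-2k-11$, contradicting the hypothesis.

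For $r=1$ the bound reads $m\le 3n-2s-6$, so $m>3n-2k-9$ forces $2s<2k+3$, i.e.\ $s\le k+1$. Combined with $s=t\ge k+1$ we get $s=t=k+1$ and $m\ge 3n-2k-8$. The upper bound $3n-2(k+1)-6=3n-2k-8$ is then attained, so every inequality used must be tight: the unique nontrivial component $C_{k+2}$ has exactly $3n_{k+2}-6$ edges, i.e.\ it is a maximal planar graph, and all other components are single vertices joined by exactly $k+1$ cut-edges. This is precisely the class $\mathcal H$ of case a).

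The main obstacle I expect is the edge cases in the per-component bounds. Specifically, for $n_j=3$ the triangle gives exactly $3=3\cdot3-6$, so that case is fine, but I must justify carefully that $n_j=2$ cannot occur. I also need to check whether the trivial-component count, and $n$ small relative to $k$, interacts with the strict inequality. For instance, when $r=1$ I should confirm that $n_{k+2}\ge3$ is forced, with the $n_{k+2}=3$ triangle included as maximal planar. Finally, the equality analysis must verify that a graph in $\mathcal H$ indeed meets $m>3n-2k-9$, so the exceptions are genuine.
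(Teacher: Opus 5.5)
Your proof is correct and is essentially the paper's argument: delete all cut-edges, rule out two-vertex components, bound each nontrivial component by $3n_j-6$, and split into cases. The only difference is bookkeeping: you index by the number $r$ of nontrivial components (tree, $r\ge 2$, $r=1$), while the paper indexes by the excess number of cut-edges and the number $p$ of trivial components (all nontrivial, mixed, all trivial). Two small remarks: your ``sharpened'' identity gives exactly the same bound as your first estimate, since $3n-5t-6+3s=3n-2s-5r-1$ after substituting $t=s+r-1$, so it is a reformulation rather than a sharpening. Also, the end-checks you list are either automatic ($n_{k+2}\ge 3$ follows from excluding $n_j=2$) or not required by the one-directional statement (checking that graphs in $\mathcal H$ satisfy the hypothesis).
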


\begin{proof}
    Assume for the sake of contradiction that $G$ has strictly more than $k$ cut-edges. Let $\{e_{1}, e_{2}, \dots, e_{k+1+r}\}$ (with $r \in \mathbb{N}$) be the set of all cut-edges of $G$. Let $G \setminus \{e_{1}, \dots, e_{k+1+r}\} = \{C_{1}, C_{2}, \dots, C_{k+2+r}\}$
    be the set of connected components obtained after removing the cut-edges. Set $n_{i} = |V(C_{i})|$ for every $i \in [k+2+r]$. Here, we note that we may assert $C_{i} \not\equiv K_{2}$ for all $i \in [k+2+r]$. Indeed, otherwise, if there exists some $C_{i} = u_{i}v_{i}$, then the edge $u_{i}v_{i}$ is considered as a cut-edge of $G$. Thus, we obtain that $n_i = 1$ or $n_i \ge 3$ for every $i\in [k+2+r]$. Since $G$ is planar, each component $C_{i}$ is also planar. Therefore, if $n_i \ge 3$, we have $|E(C_{i})| \le 3n_{i} - 6$.

We consider the following three cases:

\noindent \textbf{Case 1:} $n_i \ge 3$ for all $i \in [k+2+r]$. We conclude that
\begin{align*}
|E(G)| &= \sum_{i=1}^{k+2+r}|E(C_{i})| + (k+1+r) \le \sum_{i=1}^{k+2+r}(3n_{i}-6) + k+1+r \\
&= 3\sum_{i=1}^{k+2+r}n_{i} - 6(k+2+r) + k+1+r = 3n - 5k - 11 - 5r \leq 3n - 2k - 11, 
\end{align*}
this gives a contradiction.

\noindent \textbf{Case 2:} There exist some $i, j \in [k+2+r]$ such that $n_{i} = 1$ and $n_{j} \ge 3$. Without loss of generality, we may assume that $n_1 = n_2 = \dots = n_p = 1$ and $n_j \ge 3$ for all $j \in [k+2+r] \setminus [p]$.
Hence, we have $|E(C_{i})| = 0, \forall i \in [p]$ and $|E(C_{j})| \le 3n_{j} - 6, \forall j \in [k+2+r] \setminus [p]$. Therefore, we obtain that
\begin{align*}
|E(G)| &= \sum_{i=1}^{p}|E(C_{i})| + \sum_{j=p+1}^{k+2+r}|E(C_{j})| + (k+1+r) \le 0 + \sum_{j=p+1}^{k+2+r}(3n_{j}-6) + k+1+r \\
&= 3(n-p) - 6(k+2+r-p) + k+1+r = 3n - 5k - 11 + 3p - 5r.
\end{align*}
On the other hand, by definitions we have $p \le k+1+r$ and $r \ge 0$. Then, we conclude that
\begin{align*}
|E(G)| &\le 3n - 5k - 11 + 3p - 5r \le 3n - 5k - 11 + 3(k+1+r) - 5r \\
&= 3n - 2k - 8 - 2r.
\end{align*}
If $r\geq 1,$ then $|E(G)|\leq 3n - 2k - 10.$ This contradicts the assumption of the theorem. Otherwise, thus $|E(G)|\leq 3n - 2k - 8.$ The equality holds if and only if $r=0, p=k+1$ and $|E(C_{k+2})|=3n_{k+2}-6.$ This implies that $G$ is isomorphic to a graph $H$ in $\mathcal{H}.$

\noindent \textbf{Case 3:} $n_{i} = 1$ for all $i \in [k+2+r]$. Thus, $n = k+2+r$. Hence, we have
$$|E(G)| = k+1+r = 3(k+2+r) - 2k - 5 - 2r = 3n - 2k - 5 - 2r.$$
We now consider the following three subcases:

If $r\geq 2$, then $|E(G)| =  3n - 2k - 5 - 2r\leq 3n - 2k -9.$ This is a contradiction.

If $r=1$, then $G$ is a tree with $k+2$ cut-edges. This means that $G$ is a tree of order $k+3$.

If $r=0$, then $G$ is a tree with $k+1$ cut-edges. This means that $G$ is a tree of order $k+2$.

The proof is completed.
\end{proof}

We now give an example to show that the condition of Theorem \ref{thm1} is sharp.
\begin{example}
	\upshape
	Let $T$ be a tree of order $k+4.$ Then $T$ has $k+3$ cut-edges and $|E(T)|=k+3=3|T|-2k-9.$ This shows that the result is sharp.
\end{example}

\section{The minimum degree sum and the number of cut-edges in planar graphs}
\subsection{Theorem \ref{thm2} and its sharpness}

We first prove Theorem \ref{thm2}.

\begin{proof}
	Suppose for the sake of contradiction that $G$ has at least $k+1$ cut edges. Let $C = \{e_1, e_2, \dots, e_{k+1}\}$ be $k+1$ cut-edges of $G$. Since $G$ is connected, $G \setminus C$ contains exactly $ k+2$ components. Let $G \setminus C = \{C_1, C_2, \dots, C_{k+2}\}$ and $n_i = |V(C_i)|$. Since $G$ is a planar graph, $C_i$ is planar for every $i\in [k+2].$
    \begin{claim}\label{claim}
        For two distinct indices $i,j \in [k+2],$ if there exist $u \in V(C_i), v \in V(C_j)$ such that $uv \in E(G)$, then $uv \in C$.
    \end{claim}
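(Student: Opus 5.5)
The plan is to argue by contradiction and use the defining property of the components. Each $C_i$ is a connected component of $G\setminus C$, where $G\setminus C$ is the spanning subgraph with edge set $E(G)\setminus C$.

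Suppose $i\neq j$, $u\in V(C_i)$, $v\in V(C_j)$, $uv\in E(G)$, but $uv\notin C$. Then $uv$ is an edge of $G\setminus C$. In $G\setminus C$, the vertices $u$ and $v$ are therefore joined by a path of length one, so they lie in the same component of $G\setminus C$. Hence $C_i=C_j$, contradicting $i\neq j$, since the components $C_1,\dots,C_{k+2}$ are distinct and their vertex sets partition $V(G)$.

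The only subtle point is to confirm that the $C_i$ really are the components of $G\setminus C$ and that they are pairwise vertex-disjoint. This follows from the setup: $C_1,\dots,C_{k+2}$ are defined as the components of $G\setminus C$.

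I would also note, for later use, that the count of $k+2$ components is not actually needed here. That count comes from removing cut-edges one at a time: a cut-edge of $G$ stays a cut-edge in every subgraph containing it, so each removal increases the number of components by exactly one. For this claim, however, the component definition alone suffices, so no real obstacle is expected.
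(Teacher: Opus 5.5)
Your argument is correct, and it takes a more direct route than the paper's. You use only the fact that the $C_i$ are the components of $G\setminus C$: an edge $uv\notin C$ survives in $G\setminus C$, so its endpoints lie in one component.

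The paper instead argues through the cut-edge property of the edges in $C$. Assuming $uv\notin C$, it splits into two cases. In the first, some $e\in C$ already joins $C_i$ and $C_j$; then $uv$ gives a second connection, so deleting $e$ cannot disconnect $G$. In the second, $C_i$ and $C_j$ are linked only through a chain of cut-edges $e_{i_1},\dots,e_{i_p}$ via intermediate components; then $uv$ closes this chain into a cycle through $e_{i_1}$, contradicting that $e_{i_1}$ is a cut-edge.

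Your version shows the claim holds for an arbitrary edge set $C$. It also avoids the paper's loosely phrased ``2-connected subgraph'' step. The paper's reasoning, that no edge of $C$ lies on a cycle, is really aimed at the next step: the auxiliary graph $H$ on the components with edge set $C$ is a tree. In your framework that follows from your side remark, since $H$ is connected (because $G$ is) and has $k+2$ vertices and $k+1$ edges.
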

    \begin{proof}
        Suppose for the sake of contradiction that $uv \notin C$.
        
    \noindent\textbf{Case 1.} Suppose there exists a cut-edge $e$ in $C$ connecting $C_i$ and $C_j$. Then $G \setminus e$ is not connected. This is impossible since there exists an edge $uv$ (different from $e$) connecting $C_i$ and $C_j$.
    
    \noindent \textbf{Case 2.} Suppose there is no cut-edge connecting $C_i$ and $C_j$. Then there exists a sequence of cut-edges $e_{i_1}, e_{i_2}, \dots, e_{i_p}$ in $C$ connecting $C_i$ and $C_j$ via $C_{i_1}, C_{i_2}, \dots, C_{i_p}$. Since $C_i, C_{i_1}, C_{i_2}, \dots, C_{i_p}, C_j$ form a 2-connected subgraph in $G$, $G \setminus e_{i_1}$ remains connected (see Figure 2 for an example). This contradicts the hypothesis that $e_{i_1}$ is a cut-edge of $G$.
    \begin{center}
    \begin{tikzpicture}[
        node distance=2cm,
        bigEllipse/.style={draw, ellipse, minimum width=3.25cm, minimum height=1.5cm, align=center},
        smallCircle/.style={draw, circle, minimum size=1cm, align=center},
        vertexDot/.style={draw, circle, inner sep=1.5pt, fill=black}
    ]
        \node[bigEllipse] (Gi) at (0,0) {$C_i$};
        \node[vertexDot, label=left:$u$] (u) at ($(Gi.center) + (+0.8, 0)$) {};
        \node[bigEllipse] (Gj) at (9,0) {$C_j$};
        \node[vertexDot, label=right:$v$] (v) at ($(Gj.center) + (-0.8, 0)$) {};
        \node[smallCircle] (Gi1) at (1.5,-2.8) {$C_{i_1}$};
        \node[smallCircle] (Gi2) at (4.5,-2.8) {$C_{i_2}$};
        \node[smallCircle] (Gip) at (7.5,-2.8) {$C_{i_p}$};
        \node at (6, -2.8) {$\dots$};
        \draw (Gi) -- (Gi1) node[midway, left] {$e_{i_1}$};
        \draw (Gi1) -- (Gi2) node[midway, above] {$e_{i_2}$};
        \draw (Gip) -- (Gj) node[midway, right] {$e_{i_p}$};
        \draw (Gi2) -- (5.5, -2.8); 
        \draw (6.5, -2.8) -- (Gip); 
        \draw[dash pattern=on 25pt off 5pt] (u) -- (v) node[midway, above]{};
    \end{tikzpicture}\\
\centerline{Figure 2: A 2-connected subgraph in $G$ }
\label{P4}
    \end{center}
    \end{proof}

  By Claim \ref{claim}, we now construct a special tree $H$ with $E(H)= C$ and each component $C_i$ is considered as a vertex of $H.$ Since $H$ has at least two leaves, without loss of generality, we may assume $C_1, C_{k+2}$ are two leaves of $H.$ 
	
	If $n_1=1,$ then $\delta(G)=1.$ This is a contradiction. Hence $n_1\geq 2.$ Combining with $C_1$ is planar, we may choose a vertex $x_1\in V(C_1)$ such that $\deg_G(x_1)=\deg_{C_1}(x_1)\leq \min\{n_1-1; 5\}.$
	
	By the same argument, we also choose a vertex $x_{k+2}\in V(C_{k+2})$ such that $\deg_G(x_{k+2})=\deg_{C_{k+2}}(x_{k+2})\leq \min\{n_{k+2}-1; 5\}.$
	
	On the other hand, take each vertex $x_i \in V(C_i)$ for every $i \in \{2; ...; k+1\}.$ Thus each cut-edge contains at most two vertices in $\{x_2; x_3; ...;x_{k+1}\}$ and contains neither $x_1$ nor $x_{k+2}$. Hence, we have 
	\begin{align*}
		(k+2)\delta(G)&\leq \sum_{i=1}^{k+2}\deg_G(x_i)\leq \sum_{i=1}^{k+2}\min\{n_i-1; 5\}+2k\\
		&\leq\min\{\sum_{i=1}^{k+2}n_i-(k+2); 5(k+2)\}+2k\\
		&\leq\min\{n+ k-2; 7k+10\}.
	\end{align*}
	
	Hence
	$$ \delta(G)\leq \min\left\{\dfrac{n+k-2}{k+2}; \dfrac{7k+10}{k+2}\right\}.$$
	On the other hand, combining with $\delta(G)\leq 5$ we conclude that
	
	$$ \delta(G)\leq \dfrac{n+k-2}{k+2}.$$

This gives a contradiction.	
		
	Therefore, we complete the proof of Theorem \ref{thm2}.
\end{proof}

We now give an example to show that the result of Theorem \ref{thm2} is tight. Let $G$ be a graph consisting of two triangles connected by a path such that the order of $G$ is $n = k + 6$. Then, we obtain that
	$$ \delta(G) = 2 =\dfrac{n+k-2}{k+2}.$$
	Nevertheless, $G$ has $k+1$ cut-edges. Therefore, the result of Theorem \ref{thm2} is tight.

\subsection{Theorem \ref{thm3} and its sharpness}
We now prove Theorem \ref{thm3}

\begin{proof}
	 Suppose that the assertion is false. Then $G$ has at least two cut-edges $e_1, e_2$. Set $C_i, i \in [3]$ to be $3$ components of $G\setminus \{e_1, e_2\}.$ Without loss of generality, we may assume that $C_1$ and $C_3$ have no cut-edge connecting them (see Figure 3 for an example). Hence $xy\not\in E(G)$ for every $x\in V(C_1), y\in V(C_3).$ Since $G$ is planar, $C_1+\{e_1\}, C_3+\{e_2\}$ are planar too. Furthermore, as every planar graph has at least one vertex of degree at most 5, there exist two vertices $x\in V(C_1), y\in V(C_3)$ such that $\deg_G(x)\leq 5, \deg_G(y)\leq 5.$ Hence $\sigma_2(G)\leq \deg_G(x)+ \deg_G(y)\leq 10.$ This gives a contradiction. Therefore, the theorem holds. 
	
\end{proof}
We now introduce an example to show that the condition $\sigma_2(G)\geq 11$ is sharp. We consider three distinct planar graphs $C_i, i\in [3]$ such that $\delta(C_i)=5$ for all $i\in [3].$ Let M be the graph obtained by joining $C_1$ to $C_2$ via an edge $e_1,$ and $C_2$ to $C_3$ via an edge $e_2$  (see Figure 3). 
 It is easy to see that $M$ is planar and $\sigma_2(M) =10.$ However, $M$ has two cut-edges, which implies that the condition stated in Theorem \ref{thm3} is sharp. 

\begin{center} \label{M}
	\begin{tikzpicture}[
		component/.style={
			draw=black, 
			thick, 
			ellipse, 
			minimum width=2.5cm, 
			minimum height=1.5cm, 
			align=center
		},
		cutedge/.style={
			thick, 
		},
		vertex/.style={
			circle, 
			fill=black, 
			inner sep=0pt, 
			minimum size=4pt
		}
		]
		\node[component] (G1) at (0,0) {$C_1$};
		\node[component] (G2) [right=2cm of G1] {$C_2$};
		\node[component] (G3) [right=2cm of G2] {$C_3$};
		\node[vertex] (v1) at (G1.east) {};
		\node[vertex] (v2_left) at (G2.west) {};
		\node[vertex] (v2_right) at (G2.east) {};
		\node[vertex] (v3) at (G3.west) {};
		\draw[cutedge] (v1) -- (v2_left) node[midway, above] {$e_1$};
		\draw[cutedge] (v2_right) -- (v3) node[midway, above] {$e_2$};
	\end{tikzpicture}
\centerline{Figure 3: $M$}
\end{center}

\section{$cfc(G)$ of planar graph}
A graph $G$ is called an edge-colored graph (or briefly a colored graph) if each of its edges is assigned to one color. The concept of a \emph{conflict-free connection} was initially presented by Czap et al. \cite{Czap}. A path within a graph $G$ is labeled \emph{conflict-free} if there exists a color appearing exactly once in its edges. A graph $G$ is called \emph{conflict-free connected} if every two vertices in $V(G)$ can be connected by a conflict-free path. The minimum number of colors required to make a connected graph $G$ conflict-free connected is referred to as the \emph{conflict-free connection number} of $G$, denoted by $cfc(G)$. Many results on the conflict-free connection number of a connected colored graph have been stated: If $G$ is a non-complete, $2$-connected graph, then the authors in \cite{Czap} showed that $cfc(G)=2$. Chang et al. \cite{Chang2021} determined the conflict-free connection number of certain tree structures. For further findings regarding this subject, readers are directed to the following references: \cite{Chang2018, Chang2019, CKP, Doan2021, Doan2024}. Furthermore, Huang et al. \cite{Huang2020} have demonstrated that determining the computational complexity of $cfc(G)$ for a given graph $G$ falls under the category of NP-hard problems.

On the other hand, there are several results for the conflict-free connection number based on various conditions of cut-edges. Let $K$ be the set of cut-edges of $G$.  Ji et al. \cite{Ji-preprint} gave an upper bound for the conflict-free connection number of a connected graph based on the number of íts cut-edges.
\begin{theorem}[Ji et al. \cite{Ji-preprint}]
	\label{upper_bound_cfc(G)}
	If $G$ is a connected, non-complete graph with the cut-edges set $K$, then $cfc(G)\leq \max\lbrace 2, \vert K\vert\rbrace$.
\end{theorem}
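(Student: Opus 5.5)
The plan is to prove the bound $cfc(G)\le \max\{2,|K|\}$ by induction on $|K|$, the number of cut-edges of $G$. We build the colouring from the bridge-block structure of $G$: contract every $2$-edge-connected component ("block" here) to a single vertex. This gives a tree $T$ whose edges are exactly the cut-edges in $K$.

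\textbf{Base cases.} If $K=\emptyset$, then $G$ is $2$-edge-connected and non-complete. We then use the known fact that such a graph admits a $2$-colouring in which every pair of vertices is joined by a conflict-free path. This extends the result of Czap et al.\ for $2$-connected graphs. To obtain it, take the blocks of $G$, each of which is $2$-connected or a single edge, and glue the colourings along cut-vertices. Each block is coloured so that paths through a cut-vertex can be concatenated while keeping a colour that appears exactly once. I expect to cite this from the literature rather than reprove it. If $|K|=1$, colour the unique cut-edge with a colour $c$ that is used nowhere else. Colour each side with a single colour $d\neq c$ when that side is $2$-edge-connected; a component with a conflict-free $2$-colouring inside is handled similarly. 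Then any path crossing the bridge contains $c$ exactly once. For pairs on the same side, we use the conflict-free colouring of that side, arranged to use only the colours $\{1,2\}$, with $c$ being one of them where allowed.

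\textbf{Main step, $|K|=t\ge 2$.} Give the $t$ cut-edges pairwise distinct colours $1,\dots,t$. Colour the edges of every non-trivial $2$-edge-connected piece with a conflict-free colouring using only the colours $1$ and $2$; such a colouring exists by the base case, up to the caveat below about complete pieces. Now take $u,v$ in different pieces and let $P$ be the unique path between their pieces in $T$. Any $u$–$v$ path uses the bridges of $P$, each of which appears exactly once on the route. The only risk is that colour $i\in\{1,2\}$ also occurs inside the pieces traversed. To handle this, choose the route inside each piece to be monochromatic or suitably short, or choose which bridge carries the unique colour to be one whose colour is $\ge 3$. If every bridge on $P$ has colour in $\{1,2\}$, there are at most two such bridges. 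In that case we recolour locally, or use the freedom to permute colours, so that one bridge's colour is absent from the traversed segments.

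\textbf{Main obstacle.} The hard step is this interaction between bridge colours and the internal colouring of the $2$-edge-connected pieces. We must guarantee that some colour appears exactly once on the chosen path even when the path passes through several pieces coloured with $\{1,2\}$. I would first try to colour all internal pieces with the colours $t-1,t$ only, assuming $t\ge 2$. Then I would order the bridges along a longest path of $T$ so that any tree path contains a bridge whose colour is below $t-1$, except in short cases that can be checked directly. Complete pieces ($K_n$ with $n\ge3$ inside $G$) need separate care, since they are not covered by the non-complete hypothesis. They are handled by the fact that $cfc(K_n)=1$.
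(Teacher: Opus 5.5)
\textbf{There is a genuine gap.} The paper does not prove this statement; it is quoted from Ji et al. So your proposal has to stand on its own, and it fails exactly where you locate the ``main obstacle'': nothing controls which colours occur on the segments of a crossing path that run inside the $2$-edge-connected pieces.

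\textbf{The case $|K|=1$.} This case already breaks down. With only two colours, a bridge colour $c$ used nowhere else forces every other edge to share one colour $d$. Then two non-adjacent vertices of a non-complete side have no conflict-free path. Every path between them inside the side is monochromatic of length at least $2$, and no path can leave the side and return through a single bridge. Once you let $c$ reappear inside the sides, a path $u\cdots x\,y\cdots v$ through the bridge $xy$ may contain both colours several times, and nothing in your argument prevents this. The bare fact that $cfc(B)\le 2$ for $2$-edge-connected $B$ does not help, because it says nothing about the colours on paths from a vertex to the end of a bridge. Your sketch of that fact by gluing colourings at cut-vertices also fails as stated: concatenating two conflict-free paths can destroy the colour that occurred exactly once.

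\textbf{The main step.} It has the same hole. Pairs whose tree path contains a bridge of colour $\ge 3$ are fine. The remaining cases, however, are not ``short cases to be checked directly''. Every single bridge is itself a path of $T$, so the bridges coloured $1,2$ (or $t-1,t$ in your second attempt) always produce such pairs, and no ordering along a longest path removes them. Routing monochromatically inside pieces is not enough either. For $|K|=2$, suppose you always route inside a piece avoiding the colour of the bridge being crossed. Then the middle piece, which meets both bridges, needs both of its colour classes to be connected and spanning, which is impossible for a cycle.

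\textbf{The missing lemma.} What you need is: \emph{every $2$-edge-connected graph has a colouring with colours $1,2$ in which the colour-$2$ edges form a connected spanning subgraph and any two vertices are joined by a path containing exactly one colour-$1$ edge.} To prove it, take a closed-ear decomposition, give one edge of the initial cycle and of each ear colour $1$ and all other edges colour $2$, and induct over the ears.

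\textbf{How it completes your scheme.} Colour every piece this way and colour the bridges $1,\dots,t$.
\begin{itemize}
\item If the tree path contains a bridge of colour $\ge 3$, that colour occurs exactly once.
\item If the tree path is the colour-$1$ bridge, or the colour-$1$ and colour-$2$ bridges consecutively, use colour-$2$ paths inside all pieces, so colour $1$ occurs exactly once.
\item If the tree path is the colour-$2$ bridge $xy$ alone, take a path with exactly one colour-$1$ edge from $u$ to $x$ (or from $y$ to $v$ when $u=x$) and a colour-$2$ path on the other side. The case $u=x$, $v=y$ is trivial.
\end{itemize}
This uses $\max\{2,|K|\}$ colours uniformly, including $|K|\le 1$. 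Without such a lemma, your proposal is only a plan.
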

Moreover, let  $\lbrace K_1, K_2,\ldots, K_m\rbrace$ be components of $K$, where $m\geq 1.$ Set $h(G)=\max\{cfc(K_i)| i\in [m]\}.$ In \cite{Czap}, the authors proved that 
\begin{theorem}[Czap et al. \cite{Czap}]
	\label{cfc=cut-edges}
	If $G$ is a connected graph with cut-edges, then $h(G)\leq cfc(G)\leq h(G)+1$.
\end{theorem}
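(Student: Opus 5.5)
The plan has two parts: the lower bound comes from the fact that paths inside a cut-edge component are forced, and the upper bound comes from an explicit colouring that adds one fresh colour. Throughout, $K$ is the set of cut-edges of $G$, viewed as a spanning subgraph. Since no cycle contains a cut-edge, $K$ is a forest and each component $K_i$ is a tree.

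\emph{Lower bound $h(G)\le cfc(G)$.} Fix a conflict-free colouring of $G$ and a component $K_i$. Take $u,v\in V(K_i)$ and let $u=w_0,w_1,\dots,w_t=v$ be the unique $u$--$v$ path in $K_i$. I claim every $u$--$v$ path $P$ in $G$ equals this path.
\begin{itemize}
\item The edge $w_0w_1$ is a cut-edge separating $u$ from $v$, so $P$ must use it.
\item $P$ starts at $w_0$ and cannot revisit $w_0$. So if its first edge were not $w_0w_1$, it could never traverse $w_0w_1$. Hence the first edge of $P$ is $w_0w_1$.
\item The same argument, applied inductively at $w_1,w_2,\dots$, forces the remaining edges.
\end{itemize}
Therefore the conflict-free path joining $u$ and $v$ in $G$ lies inside $K_i$. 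The restriction of the colouring to $K_i$ is then a conflict-free colouring of $K_i$, so $cfc(K_i)\le cfc(G)$ for every $i$. Taking the maximum gives $h(G)\le cfc(G)$.

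\emph{Upper bound $cfc(G)\le h(G)+1$.} Colour each tree $K_i$ conflict-freely using colours from $\{1,\dots,h\}$; different components may reuse the same colours. Use the new colour $h+1$, sparingly, on the non-cut edges. The non-cut edges split into the $2$-edge-connected components $B_1,\dots,B_s$ of $G$, each with at least three vertices. Contracting every $B_j$ gives a tree whose edges are exactly the cut-edges.
\begin{itemize}
\item For $x,y$ in the same $K_i$, the coloured tree path inside $K_i$ already works.
\item Otherwise, any $x$--$y$ path crosses some $B_j$ along a nontrivial segment. The goal is to route the path so that colour $h+1$ appears on it \emph{exactly once}. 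The colours $1,\dots,h$ may then repeat freely.
\end{itemize}
For this it suffices to colour each $B_j$ with colours from $\{1,h+1\}$ so that the following property $(\ast)$ holds:

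$(\ast)$ any two distinct vertices $u,v$ of $B_j$ are joined inside $B_j$ both by a path with no edge of colour $h+1$ and by a path with exactly one edge of colour $h+1$.

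Given $(\ast)$, walk along the contracted tree from $x$ to $y$. In the first block where the path has a nontrivial segment, use the one-$(h+1)$ route; in every other block use the $(h+1)$-free route. Within each block the route is a path, and distinct blocks are vertex-disjoint. The cut-edges on the way are distinct and connect these segments, so the concatenation is a genuine path with exactly one edge of colour $h+1$.

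\emph{Main obstacle.} The hard step is establishing $(\ast)$ in a $2$-edge-connected graph $B$. My first attempt is the following colouring.
\begin{itemize}
\item Fix a spanning tree $T_B$ of $B$ and colour its edges $1$; the tree path then gives the $(h+1)$-free route.
\item Use an ear decomposition of $B$, which exists because $B$ is $2$-edge-connected, and colour exactly one edge of each ear with $h+1$, namely an edge not in $T_B$.
\end{itemize}
It remains to show that any two vertices $u\neq v$ can be joined by a path meeting exactly one such marked edge. The idea is to go around a cycle through a marked edge; for instance, use the fundamental cycle of a non-tree edge whose cycle meets the $T_B$-path from $u$ to $v$. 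If some configuration defeats this, I will instead invoke the known result of \cite{Czap} that bridgeless graphs have $cfc\le 2$, through its stronger form for $2$-edge-connected graphs. I would adapt the colour classes so that the minority colour becomes $h+1$ and the other colour is identified with colour $1$.
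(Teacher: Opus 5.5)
Your lower bound is correct. So is the reduction of the upper bound to property $(\ast)$: routing along the bridge tree and spending the single colour-$(h+1)$ edge in the first nontrivial block segment is sound. (The paper only quotes this theorem from Czap et al., so there is no internal proof to compare with.)

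The gap is that $(\ast)$, which carries the entire upper bound, is never proved. Your ear-decomposition colouring is not well defined as stated, because an ear need not contain an edge outside $T_B$. In a theta graph with branches $P_1,P_2,P_3$, take $T_B\supseteq P_1$ and start the decomposition with the cycle $P_2\cup P_3$. Then the ear $P_1$ consists only of tree edges, while the first cycle contains two non-tree edges.

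Your fallback fails too. A conflict-free $2$-colouring of $B$ gives neither a fixed colour that occurs exactly once nor a connected colour class. For example, the alternating colouring of $C_6$ is conflict-free, but each colour class is a perfect matching. Whichever class you rename $h+1$, antipodal vertices then have no $(h+1)$-free path. You cannot avoid needing such paths, since the entry and exit vertices of every intermediate block are forced by the cut-edges.

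Your fundamental-cycle idea does close the gap once the ears are dropped: colour $T_B$ with $1$ and \emph{every} non-tree edge of $B$ with $h+1$. The tree path is then the $(h+1)$-free route. For the other route, let $P=T_B[u,v]$ and pick an edge $t$ of $P$.
\begin{itemize}
\item Since $t$ is not a bridge of $B$, some non-tree edge $f=xy$ joins the two components of $T_B-t$, so $t$ lies on $Q=T_B[x,y]$.
\item In a tree, $P\cap Q$ is a subpath $Q[s,s']$ containing $t$, so $s\neq s'$. Name the vertices so that $s$ precedes $s'$ on $P$ and $x,s,s',y$ occur in this order on $Q$.
\item Then $P[u,s]$, $Q[s,x]$, $f$, $Q[y,s']$, $P[s',v]$ concatenate to a $u$--$v$ path with exactly one non-tree edge. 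The pieces meet only at their ends because $P\cap Q=Q[s,s']$ and $s\neq s'$.
\end{itemize}

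Alternatively, decompose $G$ into ordinary blocks and mark one edge in each $2$-connected block. Two facts then give the same conclusion: a $2$-connected graph has a $u$--$v$ path through any prescribed edge, and it stays connected after deleting one edge.
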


We can now combine our results in previous sections and Theorem \ref{upper_bound_cfc(G)} to present some conditions on the size and degree sum of a connected planar graph having a bound of conflict-free connection number. 
\begin{theorem}\label{thm1_1}
	Consider two positive integers $n, k$. Let $G$ be a connected planar graph of order $n$ and $m=|E(G)|$. If $m > 3n - 2k - 9$, then $cfc(G)\leq k$ unless one of the following cases holds:
	\begin{itemize}
		\item [{a)}] $G$ is isomorphic to a graph $H$ in $\mathcal H.$
		\item [{b)}] $G$ is isomorphic to a tree of order $k+2$.
		\item [{c)}] $G$ is isomorphic to a tree of order $k+3$.
		\end{itemize}
\end{theorem}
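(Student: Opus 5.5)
The plan is to combine Theorem \ref{thm1} with the upper bound of Ji et al.\ in Theorem \ref{upper_bound_cfc(G)}, so that no new structural work is needed. Let $G$ be a connected planar graph of order $n$ with $m>3n-2k-9$, and assume $G$ is not one of the exceptional graphs a), b), c). The hypotheses of Theorem \ref{thm1} are then satisfied word for word, and its conclusion says that the set $K$ of cut-edges of $G$ satisfies $|K|\le k$. The goal is to turn this bound on $|K|$ into the bound $cfc(G)\le k$.

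The argument splits according to whether $G$ is complete.
\begin{itemize}
\item If $G$ is complete, one colour on all edges already makes $G$ conflict-free connected, because every pair of vertices is joined by an edge, which is a one-edge path and hence trivially conflict-free. So $cfc(G)=1\le k$.
\item If $G$ is not complete, Theorem \ref{upper_bound_cfc(G)} gives $cfc(G)\le \max\{2,|K|\}\le \max\{2,k\}$.
\end{itemize}
For $k\ge 2$ the right-hand side is exactly $k$, and the theorem follows immediately.

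The main obstacle is the case $k=1$ with $G$ non-complete, where Theorem \ref{upper_bound_cfc(G)} only gives $cfc(G)\le 2$. For $k=1$ the hypothesis reads $m>3n-11$.

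\begin{itemize}
\item The first thing I would try is to show that the size condition forces $G$ to be complete, since for $n\le 4$ every graph is planar and $3n-11<\binom{n}{2}$ leaves little room. I would test this against small orders by direct counting.
\item This route cannot succeed for larger $n$. A maximal planar graph on $n\ge 5$ vertices has $3n-6>3n-11$ edges and is not one of a), b), c). It is $2$-connected and non-complete, so $cfc(G)=2$ by the result of Czap et al.\ quoted in this section.
\item Consequently the $k=1$ instance of the statement as literally written does not hold. The argument above proves it exactly for all $k\ge 2$, and for $k=1$ it gives $cfc(G)\le 2=\max\{2,k\}$.
\end{itemize}

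I would therefore write the proof for $k\ge 2$, state the bound $cfc(G)\le\max\{2,k\}$ for $k=1$, and record the maximal planar graphs above as the reason the case $k=1$ must be excluded or read as $\max\{2,k\}$.
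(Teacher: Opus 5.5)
Your proposal is correct and follows the paper's route. The paper gives no separate proof; it only remarks that the result follows by combining Theorem~\ref{thm1} with the bound $cfc(G)\le\max\{2,|K|\}$ of Theorem~\ref{upper_bound_cfc(G)}. Your extra care is warranted: that combination needs the complete case handled separately and only yields $\max\{2,k\}$, so for $k=1$ the statement as written is false (a triangulation on $n\ge 5$ vertices, or already $C_4$, is a counterexample), which the paper does not notice.
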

We note that if we consider $G=K_{1,k+3}$ of order $n=k+4$ and $|E(G)|=k+3=3(k+4)-2k-9,$ then $cfc(G)=k+3 > k.$ This implies that the condition of Theorem \ref{thm1_1} is tight.
\begin{theorem} \label{thm2_1}
	Let $n,k \in \mathbb{N}^*$ with $ 3< n\leq 4k+11.$ Let $G$ be a connected planar graph of order $n.$ If $
	\delta(G) \ge \frac{n+k-1}{k+2},
	$
	then $cfc(G)\leq k.$
\end{theorem}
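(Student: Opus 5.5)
The plan is to combine Theorem \ref{thm2} with the upper bound of Ji et al.\ (Theorem \ref{upper_bound_cfc(G)}), with a short separate treatment of complete graphs and of the small values of $k$. Under the hypotheses $3<n\le 4k+11$ and $\delta(G)\ge \frac{n+k-1}{k+2}$, Theorem \ref{thm2} applies verbatim. It gives that the cut-edge set $K$ of $G$ satisfies $|K|\le k$.

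Next we verify that $G$ is not complete, so that Theorem \ref{upper_bound_cfc(G)} may be invoked. A complete planar graph has at most $4$ vertices, so for $n\ge 5$ nothing needs checking. For $n=4$ the only complete graph is $K_4$, which is planar and $2$-connected, and $cfc(K_4)=1\le k$ because every pair of vertices is joined by a single edge, which is trivially conflict-free. So this case is settled directly. For non-complete $G$, Theorem \ref{upper_bound_cfc(G)} gives
\[
cfc(G)\le \max\{2,|K|\}\le \max\{2,k\}.
\]

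When $k\ge 2$ this is at most $k$, and we are done. The main obstacle is the case $k=1$, where the bound only yields $cfc(G)\le 2$. Here $n\le 15$ and $\delta(G)\ge n/3$, and Theorem \ref{thm2} gives $|K|\le 1$.

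\emph{Subcase $|K|=1$.} The cut-edges induce a single component $K_1\cong K_2$, so $h(G)=1$ and Theorem \ref{cfc=cut-edges} gives only $cfc(G)\le 2$. However, $cfc(G)=1$ would force every pair of vertices to be joined by a path with a single edge, i.e.\ $G$ complete. So in this subcase the claim $cfc(G)\le 1$ would be false in general. For instance, two disjoint copies of $K_4$ joined by one edge give $n=8\le 15$ and $\delta=3\ge 8/3$, yet $cfc=2>1$.

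\emph{Subcase $|K|=0$.} Here $G$ is $2$-edge-connected and, if non-complete, has $cfc(G)=2$ by \cite{Czap}.

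So the intended statement must effectively mean $k\ge 2$ (or be read as $cfc(G)\le\max\{2,k\}$). I would present the proof for $k\ge2$ as above. For $k=1$ I would note that the argument gives $cfc(G)\le 2$, and that this is the correct bound, since $cfc(G)=1$ holds only for complete graphs.
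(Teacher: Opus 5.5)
Your argument is the paper's own: the paper writes no separate proof and simply combines Theorem~\ref{thm2} with Ji et al.'s bound $cfc(G)\le\max\{2,|K|\}$ (Theorem~\ref{upper_bound_cfc(G)}), which gives $cfc(G)\le k$ exactly when $k\ge 2$. Your treatment of $K_4$, the only complete planar graph of order $n>3$, fills a small hole the paper ignores.

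Your diagnosis of $k=1$ is also correct, and it exposes an error in the statement rather than in your proof. For $k=1$ the hypothesis admits non-complete graphs, such as your two copies of $K_4$ joined by a bridge or simply $C_4$. Every non-complete graph has $cfc(G)\ge 2>k$, so the theorem should be restricted to $k\ge 2$ or read as $cfc(G)\le\max\{2,k\}$.
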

\begin{theorem} \label{thm3_1}
	Let $G$ be a planar graph. If $\sigma_2(G)\geq 11$, then $cfc(G)=2.$ 
\end{theorem}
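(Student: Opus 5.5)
We plan to combine Theorem \ref{thm3} with Theorem \ref{upper_bound_cfc(G)} and with the result of Czap et al.\ that a non-complete $2$-connected graph has conflict-free connection number $2$. Throughout, $G$ is implicitly connected, since otherwise $cfc(G)$ is undefined. By Theorem \ref{thm3}, the hypothesis $\sigma_2(G)\ge 11$ forces $G$ to have at most one cut-edge, so $|K|\le 1$. The proof will therefore establish the upper bound $cfc(G)\le 2$ and the lower bound $cfc(G)\ge 2$ separately.

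For the upper bound, we first check that $G$ is not complete. A complete planar graph has order at most $4$. In $K_n$ with $n\le 4$ we get $\sigma_2=2(n-1)\le 6<11$, and if $\sigma_2$ is defined to be $+\infty$ for complete graphs, we get $n\le 4$ directly from planarity. So either $G$ is non-complete, or $G=K_n$ with $n\le 4$. In the latter case, treated as an edge case, $cfc(K_n)=1$ for $n\ge 2$, so we must decide whether the statement intends $G$ non-complete. We would add that convention, or observe that $\sigma_2(G)\ge 11$ is only meaningful when $G$ has nonadjacent vertices. With $G$ non-complete, Theorem \ref{upper_bound_cfc(G)} gives $cfc(G)\le\max\{2,|K|\}\le 2$.

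For the lower bound, we note that $cfc(G)=1$ holds exactly when every pair of vertices is joined by a path whose single colour appears once, that is, by a path of length $1$. This forces $G$ to be complete, which is excluded. Hence $cfc(G)\ge 2$, and so $cfc(G)=2$. If $|K|=0$ we could alternatively cite the result of Czap et al.\ for $2$-edge-connected graphs, but Theorem \ref{upper_bound_cfc(G)} already covers both $|K|=0$ and $|K|=1$ uniformly.

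The main obstacle is the degenerate setting rather than the combinatorics. We must make precise the convention for $\sigma_2$ on complete graphs, which are the only case where the conclusion $cfc=2$ could fail, and confirm that $\sigma_2(G)\ge 11$ in fact implies that there are two nonadjacent vertices. We would handle this by noting that $\sigma_2$ is taken over nonadjacent pairs, so the hypothesis presupposes that such a pair exists and hence that $G$ is non-complete. The rest follows directly from Theorem \ref{thm3} and Theorem \ref{upper_bound_cfc(G)}.
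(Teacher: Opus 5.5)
Your proposal is correct and follows the paper's argument. The paper gives no written proof; it only says to combine Theorem~\ref{thm3} (at most one cut-edge) with Theorem~\ref{upper_bound_cfc(G)} to get $cfc(G)\le 2$. Your lower bound via non-completeness and your explicit treatment of the complete-graph convention fill in what the paper leaves unstated.

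As a side remark, there is a short count using $\sum_{v}(6-\deg_G(v))=6n-2|E(G)|\ge 12$, valid for planar $G$ with $n\ge 3$. It shows that the vertices of degree at most $5$ cannot form a clique unless $G\in\{K_3,K_4\}$. Hence every non-complete connected planar graph has $\sigma_2(G)\le 10$, so under your convention the statement is in fact vacuous.
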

We end this section by giving an example to show that the result of Theorem \ref{thm3_1} is sharp. Consider $k+1$ connected planar graphs $G_1, G_2, ..., G_{k+2}$ such that $n_i = |V(G_i)|\geq 12$ and $ \delta(G_i)= 5$ for every $i\in [k+2].$ Take $x_i \in V(G_i)$ such that $\deg_{G_i}(x_i)= 5$ and $y_i\in V(G_i)\setminus\{x_i\}$ for each $i\in [k+2].$ Joining $y_{k+2}$ to $y_j$ for every $j\in [k+1]$. The resulting graph $N$ is planar and $\sigma_2(G)= 10$ (see Figure 4). Nevertheless, we obtain $cfc(G)\geq k+1$ by Theorem \ref{cfc=cut-edges}. Hence, the result of Theorem \ref{thm3_1} is sharp.
\begin{center}
	\begin{tikzpicture}[
		subgraph/.style={draw, ellipse, thick, minimum width=2.35cm, minimum height=1.65cm, align=center},
		vertex/.style={circle, fill=black, inner sep=1.5pt}
		]
		\node[subgraph] (Gk2) at (0,0) {$G_{k+2}$};
		\node[vertex, label=above:$y_{k+2}$] (yk2) at ($(Gk2.south)+(0,0.1)$) {};
		\foreach \i/\xpos in {1/-6, 2/-2, k/2, {k+1}/6} {
			\node[subgraph] (G\i) at (\xpos, -3.5) {$G_{\i}$}; 
			\node[vertex, label=below:$y_{\i}$] (y\i) at ($(G\i.north)-(0,0.1)$) {};
			\draw[thick] (yk2) -- (y\i) 
			node[pos=0.5, xshift=2.65mm, right, black] {$e_{\i}$};
		}
		\node at (0, -3.5) {$\dots$};
		\end{tikzpicture}
	\centerline{Figure 4: Graph $N$}
	\end{center}

\end{document}